\titleformat{\subsection}{\it}{\thesubsection.\enspace}{1pt}{}
\newtheorem{theo}{Theorem}[section]
\newtheorem{lemm}[theo]{Lemma}
\newtheorem{coro}[theo]{Corollary}
\newtheorem{rema}[theo]{Remark}
\begin{document}
\title{A Simple Method for the Optimal Transportation
\hspace{-4mm}
}

\author{Xian-Tao Huang$^1$
}
\footnotetext[1]{School of Mathematics and Computational Science, Sun Yat-sen University, Guangzhou, 510275, E-mail address: \it hxiant@mail2.sysu.edu.cn.}
\date{}
\maketitle

\begin{abstract}

In this paper we will give a new proof of the monotonicity of Wasserstein distances of two diffusions under super Ricci flow. Our proof is based on the coupling method of B. Andrew and J. Clutterbuck (see \cite{Ben}). The same method can also be applied to the contractivity of normalized $\mathscr{L}$-Wasserstein distance under backward Ricci flow.

\vspace*{5pt}
\noindent {\it 2000 Mathematics Subject Classification}: 53C44

\vspace*{5pt}
\noindent{\it Keywords}: Optimal transportation, super Ricci flow, Ricci flow.
\end{abstract}

\vspace*{10pt}

\section{Introduction}

Suppose $M$ is a compact oriented Riemannian manifold of dimension $n$. Let $\mu,\nu\in{P(M)}$ be two Borel probability measures on $M$. Let $c: M\times{M}\rightarrow\mathbb{R}\bigcup\{+\infty\}$ be a lower semi-continuous cost function. One can consider the Monge-Kantorovich minimization problem:
 \[\mathrm{T}_c(\mu,\nu)=\underset{\pi\in\Gamma(\mu,\nu)}{\inf}\int_{M\times{M}}c(x,y)d\pi(x,y),\]
where $\Gamma(\mu,\nu)$ is the set of Borel probability measures $\pi$ on $M\times{M}$ which have marginals $\mu$ and $\nu$ (i.e. $\pi(A\times{M})=\mu(A), \pi(M\times{A})=\nu(A)$ for every Borel set $A\subset{M}$).
In particular, when $c(x,y)=d^{p}(x,y)$, we have the $p$-Wasserstein distance $(p>0)$ between $\mu$ and $\nu$:
\[W_{p}(\mu,\nu)=\left(\underset{\pi\in\Gamma(\mu,\nu)}{\inf}\int_{M\times{M}}d^{p}(x,y)d\pi(x,y)\right)^\frac{1}{p}.\]

Many efforts have been devoted to characterize manifolds with lower bounds for the Ricci curvature (see e.g. \cite{Lott1} \cite{Sturm} \cite{Sturm4} and the references therein), and some of these characterizations use optimal transportation as a key tool.
For example, in \cite{Sturm}, Sturm and von Renesse proved that $Ric(M)\geq{K}$ is equivalent to the condition that $e^{Kt}W_{p}(\mu(t),\nu(t))$ is nonincreasing in $t$ for all $p\in[1,\infty]$, where $\mu(t),\nu(t)$ are two solutions of the heat equation.

The above papers all considered manifolds with static metrics. In the paper \cite{Topping2}, McCann and Topping first considered the equivalent properties of super Ricci flow, i.e. a smooth family of metrics $g_{\tau}$, $\tau\in[\tau_1,\tau_2]$, parameterized backward in time, on a compact oriented $n$-dimensional manifold $M$, satisfying
\begin{align}\label{superRF}
  -\frac{\partial}{\partial\tau}g_{\tau}+2Ric(g_{\tau})\geq0.
\end{align}

Let $\mu(\tau)$, $\tau\in(\tau_{1},\tau_{2})$, be a family of probability measures with $d\mu(\tau)=u(\cdot,\tau)dV_{\tau}$ such that $u$ satisfies the conjugate heat equation
\begin{align}\label{conjheateq}
  \frac{\partial{u}}{\partial\tau}=\Delta_{\tau}u-\left(\frac{1}{2}\textmd{tr}\frac{\partial{g}_{\tau}}{\partial\tau}\right)u,
\end{align}
where $dV_{\tau}$ and $\Delta_{\tau}$ denote the volume form and Laplacian with respect to $g_{\tau}$, respectively.
For brevity, we will refer to such a family $\mu(\tau)$ as a diffusion throughout this paper.
We will often abuse the notation of the probability $\mu$ and the volume form $d\mu$ for simplicity.

One can consider the problem of optimal transportation of two diffusions $d\mu(\tau)=u(\cdot,\tau)dV_\tau$, $d\nu(\tau)=v(\cdot,\tau)dV_\tau$.
Let $\eta:\mathbb{R}^+\times\mathbb{R}^+\rightarrow\mathbb{R}^+$ be a smooth function with $\eta(0,\tau)=0$. We consider a time-dependent cost function $c_{\tau}(x,y)=\eta(d_{\tau}(x,y),\tau)$, where $d_{\tau}(x,y)$ denotes the distance between $x$ and $y$ with respect to $g_{\tau}$.
The corresponding Monge-Kantorovich minimization problem is denoted by
\begin{align}
  \mathrm{T}_{c_{\tau}}(\mu(\tau),\nu(\tau))=\underset{\pi\in\Gamma(\mu(\tau),\nu(\tau))}{\inf}\int_{M\times{M}}c_{\tau}(x,y)d\pi(x,y),
\end{align}
The corresponding $p$-Wasserstein distance $(p>0)$ between $\mu(\tau)$ and $\nu(\tau)$ is denoted by $W_{p}(\mu(\tau),\nu(\tau)).$

In \cite{Topping2}, the authors proved the $2$-Wasserstein contractivity under super Ricci flow by calculating the derivatives of the entropy along Wasserstein geodesics. Lott \cite{Lott2} gave a new proof of the $2$-Wasserstein contractivity under Ricci flow. The $1$-Wasserstein contractivity was proved by Ilmanen, see \cite {Topping3}. Such monotonicity results were extended to a more general class of cost functions by Arnaudon, Coulibaly and Thalmaier \cite{Arnaudon} using a probabilistic method.

One of the purposes of this paper is to give an alternative proof of some of the monotonicity results in \cite{Arnaudon}. Our main theorem is as follows:

\begin{theo}\label{main}
If $g_{\tau}$ satisfies
\begin{align}\label{superKRF}
  -\frac{\partial}{\partial\tau}g_{\tau}+2Ric(g_{\tau})\geq2Kg_{\tau},
\end{align}
$d\mu(\tau)=u(x,\tau)dV_\tau$, $d\nu(\tau)=v(y,\tau)dV_\tau$ are two diffusions.
Furthermore, $\eta: \mathbb{R}^+\times\mathbb{R}^+\rightarrow\mathbb{R}^+$ satisfies
\begin{align}\label{condi_c}
   \left\{
  \begin{array}{l}
    \eta(0,\tau)=0,\\
    \frac{\partial}{\partial{s}}\eta(s,\tau)\geq0,\\ -\frac{\partial}{\partial\tau}\eta(s,\tau)+Ks\frac{\partial}{\partial{s}}\eta(s,\tau)-{\min\biggl{\{4\frac{\partial^{2}}{\partial{s}^{2}}\eta(s,\tau),0\biggr\}}}\geq0. \end{array}\right.
\end{align}
If $c_{\tau}(x,y)=\eta(d_{\tau}(x,y),\tau)$, then $\mathrm{T}_{c_{\tau}}(\mu(\tau),\nu(\tau))$ is nonincreasing in $\tau$.
\end{theo}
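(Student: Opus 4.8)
The strategy is to follow the coupling method of Andrews–Clutterbuck and bound the upper Dini derivative of $\mathrm{T}_{c_\tau}(\mu(\tau),\nu(\tau))$ from above by $0$. First I would fix a time $\tau_0$ and let $\pi_0$ be an optimal coupling for the cost $c_{\tau_0}$ between $\mu(\tau_0)$ and $\nu(\tau_0)$; since the $c_\tau$-minimization problem always admits a minimizer, it suffices to produce, for $\tau$ slightly larger than $\tau_0$, a competitor coupling $\pi_\tau\in\Gamma(\mu(\tau),\nu(\tau))$ with $\int c_\tau\, d\pi_\tau \le \int c_{\tau_0}\, d\pi_0 + o(\tau-\tau_0)$. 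The natural way to build $\pi_\tau$ is to transport $\pi_0$ by the (coupled) conjugate heat flow on $M\times M$: one constructs a parabolic flow that evolves the coupling so that its marginals remain $\mu(\tau)$ and $\nu(\tau)$, mimicking the diffusion equation satisfied by $u$ and $v$. Concretely, I expect one writes $\frac{d}{d\tau}\big|_{\tau_0^+}\int_{M\times M} c_\tau\, d\pi_\tau$ as a sum of two terms: a term $\int \partial_\tau c_\tau\, d\pi_0$ coming from the explicit $\tau$-dependence of the cost, and a term coming from the evolution of the measure, which after integrating by parts twice becomes an integral of $(\Delta_x + \Delta_y)$ applied to $c_\tau(x,y)$ against $d\pi_0$ plus lower-order terms from the $\tfrac12\mathrm{tr}\,\partial_\tau g$ factor.

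The heart of the argument is then a pointwise inequality on the support of the optimal coupling $\pi_0$. On $\mathrm{supp}\,\pi_0$ one has the usual first- and second-order conditions for $c_{\tau_0}$-optimality (the $c$-cyclical monotonicity / no-crossing conditions of Andrews–Clutterbuck), which control the Laplacian of $x\mapsto c_{\tau_0}(x,y)$ plus $y\mapsto c_{\tau_0}(x,y)$ along a minimizing geodesic joining $x$ to $y$. The key computation is a second-variation estimate for the function $s\mapsto \eta(d_{\tau_0}(\cdot,\cdot),\tau_0)$ evaluated on the geodesic: using the index-form/Jacobi-field bound and the curvature hypothesis $-\partial_\tau g + 2\mathrm{Ric}\ge 2Kg$, one arrives at something like
\begin{align*}
  (\Delta_x+\Delta_y)\,c_{\tau_0}(x,y) \;\le\; -K\, d_{\tau_0}(x,y)\,\eta_s + \min\Big\{4\eta_{ss},0\Big\} + \big(\tfrac12\mathrm{tr}\,\partial_\tau g\big)\text{-terms},
\end{align*}
where the factor $4$ and the $\min\{\cdot,0\}$ truncation arise exactly because the second-derivative term $\eta_{ss}$ is only helpful when it is nonnegative, while when it is negative one must discard it (hence the $\min$) and when $\eta_s \geq 0$ one can absorb a geodesic of length $d$ contributing a factor bounded by $4$ from the antipodal-point estimate. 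Feeding this into the derivative computation and adding $\partial_\tau c_\tau = \partial_\tau\eta$, the total is $\le \int\big[\partial_\tau\eta + K s\,\eta_s - \min\{4\eta_{ss},0\}\big]\,d\pi_0$ evaluated at $s=d_{\tau_0}(x,y)$, wait—sign-wise I want $-\partial_\tau\eta + Ks\,\eta_s - \min\{4\eta_{ss},0\}\ge 0$, which is precisely hypothesis \eqref{condi_c}, so the derivative is $\le 0$.

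Two technical points will need care and I expect the geometric estimate to be the main obstacle. The first is the lack of smoothness of $d_\tau$ on the diagonal and on the cut locus: this is handled in the Andrews–Clutterbuck framework by barrier/support-function arguments, replacing $d_\tau$ by a smooth function from above that agrees with it at the relevant point, or by noting that the optimal coupling avoids the problematic set. The second, and genuinely the crux, is getting the curvature term to appear with the correct constant: one must differentiate $d_\tau(x,y)$ in $\tau$ (producing $-\tfrac12\int_\gamma \partial_\tau g(\dot\gamma,\dot\gamma)$ along the minimizing geodesic $\gamma$) and combine it with the spatial Laplacian of the distance (producing a $\mathrm{Ric}$ term via Bochner / second variation of arc length) so that the combination $-\partial_\tau g + 2\mathrm{Ric}$ emerges and can be estimated below by $2Kg$, yielding the $Ks\,\eta_s$ contribution; simultaneously the $\tfrac12\mathrm{tr}\,\partial_\tau g$ terms from the conjugate heat equation must cancel against corresponding terms from differentiating the volume form, leaving only the clean expression above. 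Once these cancellations are verified, the theorem follows by integrating the differential inequality from $\tau_1$ to $\tau_2$.
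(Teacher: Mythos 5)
Your plan is \emph{primal}: evolve an optimal coupling $\pi_\tau$ forward by a coupled diffusion and bound $\frac{d^+}{d\tau}\int c_\tau\,d\pi_\tau$ directly. The paper works \emph{dually}: it fixes $b$, takes a near-optimal Kantorovich pair $(\alpha_b,\beta_b)$ at time $b$, solves the backward heat equation $-\partial_\tau\varphi=\Delta_\tau\varphi$, $-\partial_\tau\psi=\Delta_\tau\psi$ on $(\tau_1,b)$, notes that $J_\tau(\varphi_\tau,\psi_\tau)$ is conserved (this is exactly where the $\tfrac12\mathrm{tr}\,\partial_\tau g$ terms cancel, for functions on $M$, not on $M\times M$), and then uses the parabolic maximum principle on $Z(x,y,\tau)=\varphi(x,\tau)+\psi(y,\tau)-c_\tau(x,y)+\epsilon(\tau-b-1)$ together with the operator $\mathscr{D}_\tau f:=\inf\{\mathrm{tr}(A\,\tilde\nabla_\tau^2f):A\in\mathscr{F}_\tau\}$ to show $(\varphi_\tau,\psi_\tau)$ stays competitive. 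Conservation of the dual functional replaces your need to transport a (singular, concentrated) measure on $M\times M$.

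The genuine gap in your proposal is exactly the object you take for granted: a ``coupled conjugate heat flow'' on $M\times M$ whose generator, tested against $c_\tau$, produces the index-form estimate along the minimizing geodesic. The operator you write down, $\Delta_x+\Delta_y$, will not do: the favorable second-variation bound comes from moving $x$ and $y$ \emph{jointly} by parallel transport along $\gamma$ and by the antipodal variation in the tangential direction, i.e.\ from $\sum_{i<n}\tilde\nabla^2c\big((E_i,E_i),(E_i,E_i)\big)+\tilde\nabla^2c\big((E_n,\pm E_n),(E_n,\pm E_n)\big)$, whereas $\Delta_x+\Delta_y=\sum_i\tilde\nabla^2c\big((E_i,0),(E_i,0)\big)+\sum_j\tilde\nabla^2c\big((0,F_j),(0,F_j)\big)$ has no cross terms and gives a much worse bound on $\nabla^2 d$ (in fact it blows up near the diagonal). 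The paper encodes these cross terms in $\mathscr{D}_\tau$ as a pointwise infimum over the compatible cone $\mathscr{F}_\tau$, which is well-defined everywhere, needs no global construction, and yields Lemma~\ref{lemmaaaa}, $(-\tfrac{d^+}{d\tau}-\mathscr{D}_\tau)c_\tau\ge 0$, in the support sense needed for the maximum principle. By contrast, an actual coupled diffusion realizing this infimum (parallel/mirror coupling of Brownian motions) is singular across the cut locus and the diagonal, and constructing it rigorously is precisely the content of the probabilistic proofs of Arnaudon--Coulibaly--Thalmaier and Kuwada--Philipowski that this paper is designed to replace. Your fallback that ``the optimal coupling avoids the problematic set'' is also not available for general $\eta$ (it already fails at $p=1$). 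So the geometric computation you describe is the right Lemma, but the mechanism for deploying it --- primal coupled-process evolution --- either collapses back to the probabilistic route or must be replaced by the duality/maximum-principle device the paper actually uses.
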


As a corollary, if $K=0$ and $\frac{\partial}{\partial\tau}\eta(s,\tau)=0$, we have

\begin{coro}\label{coro1}
Suppose $g_{\tau}$ satisfies $(\ref{superRF})$, $d\mu(\tau)$, $d\nu(\tau)$ are two diffusions, the function $\eta: \mathbb{R}^+\rightarrow\mathbb{R}^+$ satisfies $\eta(0)=0, \eta'\geq0$. If $c_{\tau}(x,y)=\eta(d_{\tau}(x,y))$, then $\mathrm{T}_{c_{\tau}}(\mu(\tau),\nu(\tau))$ is nonincreasing in $\tau$.
\end{coro}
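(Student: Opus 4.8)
ncondi_c), while Corollary 1 is exactly that special case, and Theorem 1's conclusion gives monotonicity of $\mathrm{T}_{c_\tau}$ — which is exactly what Corollary 1 asserts.)

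The plan is to deduce this directly from Theorem~\ref{main} by specializing the parameters. Given $\eta:\mathbb{R}^+\to\mathbb{R}^+$ with $\eta(0)=0$ and $\eta'\ge0$, define $\tilde\eta:\mathbb{R}^+\times\mathbb{R}^+\to\mathbb{R}^+$ by $\tilde\eta(s,\tau)=\eta(s)$, so that $\tilde\eta$ is independent of the time variable, and take $K=0$. Then the flow hypothesis $(\ref{superRF})$ is precisely $(\ref{superKRF})$ with $K=0$, and $c_\tau(x,y)=\eta(d_\tau(x,y))=\tilde\eta(d_\tau(x,y),\tau)$. It therefore suffices to check that $\tilde\eta$ satisfies the three conditions in $(\ref{condi_c})$ when $K=0$.

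The first condition, $\tilde\eta(0,\tau)=\eta(0)=0$, and the second, $\frac{\partial}{\partial s}\tilde\eta(s,\tau)=\eta'(s)\ge0$, hold by hypothesis. For the third, since $\frac{\partial}{\partial\tau}\tilde\eta\equiv0$ and $K=0$, the left-hand side reduces to $-\min\{4\eta''(s),0\}$; and $\min\{4\eta''(s),0\}\le0$ for every $s$ by the definition of the minimum, so $-\min\{4\eta''(s),0\}\ge0$. Hence all of $(\ref{condi_c})$ holds, Theorem~\ref{main} applies, and $\mathrm{T}_{c_\tau}(\mu(\tau),\nu(\tau))$ is nonincreasing in $\tau$.

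There is essentially no obstacle here: the content of the corollary is exactly that, when $K=0$ and the cost is time-autonomous, the only surviving requirement in $(\ref{condi_c})$ is $\eta'\ge0$, because the term involving $\eta''$ enters only through its negative part and is automatically dominated by zero. The one point meriting a word of care is the regularity of $\eta$: Theorem~\ref{main} is stated for smooth $\eta$, so if one wishes to allow a merely continuous $\eta$ with $\eta(0)=0$ and nondecreasing difference quotients, one approximates it monotonically by smooth functions $\eta_k$ with $\eta_k(0)=0$ and $\eta_k'\ge0$, applies the preceding argument to each $\eta_k$, and passes to the limit using that $\mathrm{T}_{c_\tau}$ is monotone in the cost and continuous under uniform convergence of the cost on $M\times M$ (the relevant distances staying bounded as $\tau$ ranges over a compact subinterval).
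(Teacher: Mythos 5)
Your argument is correct and is exactly the paper's route: the corollary is obtained by specializing Theorem \ref{main} to $K=0$ and $\tau$-independent $\eta$, where the first two conditions in (\ref{condi_c}) are the hypotheses on $\eta$ and the third reduces to $-\min\{4\eta'',0\}\geq0$, which holds automatically. The extra remark on approximating non-smooth $\eta$ is fine but not needed, since the statement (like Theorem \ref{main}) tacitly assumes smooth $\eta$.
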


For general $K$, if we choose $\eta(s,\tau)=e^{pK\tau}s^{p}$ for some ${p}>0$, then the conditions $(\ref{condi_c})$ are all satisfied. Notice that in this case,
\[e^{K\tau}W_{p}(\mu(\tau),\nu(\tau))=\{T_{c_{\tau}}(\mu(\tau),\nu(\tau))\}^{\frac{1}{p}},\]
we have

\begin{coro}\label{coro2}
If $g_{\tau}$, $d\mu(\tau)$, $d\nu(\tau)$ are the same as in Theorem \ref{main}, then $e^{K\tau}W_{p}(\mu(\tau),\nu(\tau))$ $({p}>0)$ is nonincreasing in $\tau$.
\end{coro}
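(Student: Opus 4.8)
The plan is to prove Theorem~\ref{main} directly and then obtain Corollary~\ref{coro2} by specialization, since Corollary~\ref{coro2} is precisely the case $\eta(s,\tau)=e^{pK\tau}s^p$. So the real work is Theorem~\ref{main}, and the strategy follows the Andrews--Clutterbuck coupling philosophy: instead of differentiating entropy along a Wasserstein geodesic (as in \cite{Topping2}), one fixes a smooth optimal (or nearly optimal) transport plan at a given time and estimates the time-derivative of its cost using the PDEs \eqref{conjheateq} satisfied by $u$ and $v$. Concretely, I would first assume by approximation that the optimal plan is supported on the graph of a smooth map, and work with a smooth function realizing the transport; the key quantity is $\Phi(\tau)=\int_{M\times M} c_\tau(x,y)\,d\pi_\tau(x,y)$ for a well-chosen family $\pi_\tau$, and I want to show $\limsup_{h\to 0^+}\frac{1}{h}(\Phi(\tau+h)-\Phi(\tau))\le 0$, which since $\mathrm{T}_{c_\tau}\le\Phi$ with equality at the base time yields the monotonicity.

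The main analytic step is a pointwise differential inequality along minimizing geodesics. Fix $\tau_0$ and an optimal pair $(x_0,y_0)$; let $\gamma$ be a minimizing $g_{\tau_0}$-geodesic from $x_0$ to $y_0$ of length $\ell=d_{\tau_0}(x_0,y_0)$. The time-derivative of the cost splits into (i) the explicit $\tau$-dependence $\partial_\tau\eta$, (ii) the motion of $\mu$ and $\nu$ — handled by integrating the conjugate heat equation against the Kantorovich potentials, producing Laplacian-type terms, and (iii) the change of the distance function $d_\tau$ itself under the evolving metric, which by the first variation formula contributes $-\tfrac12\int_\gamma \dot g_\tau(\gamma',\gamma')$. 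Combining (i)--(iii), the curvature condition \eqref{superKRF} is used to replace $\dot g_\tau$ by $2\operatorname{Ric}-2Kg_\tau$ along $\gamma$, and then the second-variation / index-form estimate converts the Ricci term into a bound involving the Laplacian of the distance; the factor $4$ and the $\min\{4\,\partial_s^2\eta,0\}$ term in \eqref{condi_c} arise exactly from bounding the second-derivative-in-space contribution of $c_\tau$ along the geodesic by parallel-transporting an orthonormal frame and summing the index form over $n-1$ directions plus the radial direction. This is the standard Andrews--Clutterbuck two-point maximum-principle computation, adapted to the transport setting.

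The hardest part will be making the ``differentiate the optimal plan'' step rigorous: the optimal plan need not be unique or smooth, the Kantorovich potentials are only semiconcave, and $d_\tau$ is not smooth on the cut locus. I would circumvent this the way \cite{Ben} and \cite{Topping2} do — either by a doubling-of-variables / viscosity argument (test the function $\tau\mapsto\mathrm{T}_{c_\tau}$ against a smooth support function from above and apply the maximum principle at an interior maximum of an auxiliary two-point function on $M\times M\times[\tau_1,\tau_2]$), or by first establishing the inequality for the smooth approximations (mollified densities bounded away from $0$, cost $\eta+\varepsilon s^2$) where optimal maps are smooth by Caffarelli-type regularity, and then passing to the limit using stability of optimal transport and lower semicontinuity of $\mathrm{T}_c$. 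A minor technical point is checking that the hypotheses \eqref{condi_c} are stable under the $\varepsilon$-perturbation of $\eta$, which is immediate since adding $\varepsilon s^2$ only helps the sign in the third inequality when $K\ge 0$ and is controlled for general $K$ on the relevant compact range of $s\le\operatorname{diam}$.

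Finally, for Corollary~\ref{coro2}: with $\eta(s,\tau)=e^{pK\tau}s^p$ one computes $\partial_s\eta=pe^{pK\tau}s^{p-1}\ge0$, $-\partial_\tau\eta+Ks\,\partial_s\eta=-pKe^{pK\tau}s^p+pKe^{pK\tau}s^p=0$, and $\partial_s^2\eta=p(p-1)e^{pK\tau}s^{p-2}$, so $-\min\{4\partial_s^2\eta,0\}\ge 0$; hence all three conditions in \eqref{condi_c} hold and Theorem~\ref{main} gives that $\mathrm{T}_{c_\tau}(\mu(\tau),\nu(\tau))=e^{pK\tau}W_p(\mu(\tau),\nu(\tau))^p$ is nonincreasing, equivalently $e^{K\tau}W_p(\mu(\tau),\nu(\tau))$ is nonincreasing, as claimed.
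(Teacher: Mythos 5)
Your final paragraph — verifying that $\eta(s,\tau)=e^{pK\tau}s^p$ satisfies \eqref{condi_c} and then reading off $e^{K\tau}W_p=(\mathrm{T}_{c_\tau})^{1/p}$ — is exactly the paper's derivation of Corollary~\ref{coro2} from Theorem~\ref{main}, and the computations are correct. The bulk of your proposal, however, sketches a proof of Theorem~\ref{main} that differs in an important structural way from the paper's. You propose a primal strategy: fix a (nearly) optimal plan or map at the base time, differentiate the transport cost along an evolving family of plans, and handle the lack of regularity either by Caffarelli-type smoothing of the optimal map or by a viscosity argument. The paper instead works entirely on the dual side via Kantorovich duality (Lemma~\ref{dual}): it takes a near-optimal competitive pair $(\alpha_b,\beta_b)$ at time $b$, evolves $\varphi,\psi$ backward by the plain backward heat equation $-\partial_\tau f=\Delta_\tau f$ (so $J_\tau(\varphi,\psi)$ is conserved against the conjugate heat flow of the densities), and then shows the pair \emph{remains competitive} for earlier $\tau$ by applying the maximum principle to $Z(x,y,\tau)=\varphi(x,\tau)+\psi(y,\tau)-c_\tau(x,y)+\epsilon(\tau-b-1)$ using the key differential inequality $(-\frac{d^+}{d\tau}-\mathscr{D}_\tau)c_\tau\ge 0$ of Lemma~\ref{lemmaaaa}. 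The duality route completely sidesteps the regularity issues you flag: there is no need for the optimal plan to be unique, smooth, or even a map, because one only ever compares smooth solutions $\varphi,\psi$ against the (merely Lipschitz, handled in support sense) cost $c_\tau$. Your geometric computations — first and second variation of length, the index-form estimate producing the Ricci integral, the factor $4\eta''$ from the radial direction, the min-coupling of Laplacians — do match the content of Lemma~\ref{lemmaaaa}, so your version of the differential inequality is on target; what you would gain by switching to the duality framework is a much shorter rigor argument, since the Caffarelli-regularity approximation and the stability-of-optimal-plans limit you outline are both avoidable.
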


\begin{rema}
If the family of metrics are fixed, one can recover the corresponding monotonicity of Wasserstein distances for manifolds with Ricci curvature bounded from below, which was proved by Sturm and von Renesse \cite{Sturm}.
\end{rema}

\begin{rema}
Corollary \ref{coro1} and Corollary \ref{coro2} were proved originally by a probabilistic method in \cite{Arnaudon}.
\end{rema}

The method we use originated from Andrew and Clutterbuck's papers \cite{Ben3} \cite{Ben}. They have used this method to bound the modulus of continuity of solutions of various parabolic equations. We will construct an operator that can be interpreted as a coupling of two Laplacians, and use the parabolic maximum principle in our argument.

There is an analogous notion of $\mathscr{L}$-Wasserstein distance introduced by Topping in \cite{Topping}.

Suppose we have the backward Ricci flow, $\frac{\partial}{\partial\tau}g_{\tau}=2Ric(g_{\tau})$, defined
on an open interval $I$ containing $[\bar{\tau}_{1},\bar{\tau}_{2}]$, where $0<\bar{\tau}_{1}<\bar{\tau}_{2}$. Perelman's $\mathscr{L}$-length of a smooth curve $\gamma:[\tau_{1},\tau_{2}]\rightarrow M$ $([\tau_{1},\tau_{2}]\subset{I})$ is defined to be
\[\mathscr{L}(\gamma):=\int_{\tau_{1}}^{\tau_{2}}\sqrt{\tau}(R(\gamma(\tau),\tau)+|\gamma'(\tau)|^{2}_{g_{\tau}})d\tau,\]
where $R(x,\tau)$ denotes the scalar curvature at $x$ in $(M,g_{\tau})$.
The $\mathscr{L}$-distance between a pair of points $(x,\tau_{1})$ and $(y,\tau_{2})$ is defined to be
\[Q(x,\tau_{1};y,\tau_{2}):=\inf\biggl\{\mathscr{L}(\gamma)|\gamma:[\tau_{1},\tau_{2}]\rightarrow M \text{ is smooth, } \gamma(\tau_{1})=x, \gamma(\tau_{2})=y\biggr\}.\]
One can also consider the notion of $\mathscr{L}$-Wasserstein distance $V(\nu_{1}(\tau_{1}),\tau_{1};\nu_{2}(\tau_{2}),\tau_{2})$ between
two diffusions $\nu_{1}(\tau)$ and $\nu_{2}(\tau)$:
\[V(\nu_{1}(\tau_{1}),\tau_{1};\nu_{2}(\tau_{2}),\tau_{2}):=\underset{\pi\in\Gamma(\nu_{1}(\tau_{1}),\nu_{2}(\tau_{2}))}{\inf}\int_{M\times{M}}Q(x,\tau_{1};y,\tau_{2})d\pi(x,y).\]

Now let $\tau_{1}=\tau_{1}(s):=\bar{\tau}_{1}e^{s}$, $\tau_{2}=\tau_{2}(s):=\bar{\tau}_{2}e^{s}$ be two exponential functions of $s\in\mathbb{R}$, and define the normalized distance between the diffusions $\nu_{1}(\tau_{1}(s))$ and $\nu_{2}(\tau_{2}(s))$ by
\[\Theta(s):=2(\sqrt{\tau_{1}}-\sqrt{\tau_{2}})V(\nu_{1}(\tau_{1}),\tau_{1};\nu_{2}(\tau_{2}),\tau_{2})-2n(\sqrt{\tau_{1}}-\sqrt{\tau_{2}})^{2}\]
for $s$ in a neighborhood of $0$ such that $\nu_{i}(\tau_{i}(s))$ are defined $(i=1,2)$.

Topping proved the following theorem in \cite{Topping}:

\begin{theo}[Topping]\label{Loptimal}
    $\Theta(s)$ is a nonincreasing function of $s$.
\end{theo}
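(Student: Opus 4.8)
The plan is to adapt the coupling-operator / parabolic maximum principle argument used for Theorem \ref{main} to the $\mathscr{L}$-geometry setting. First I would recall that, just as the ordinary Wasserstein quantities satisfy a differential inequality driven by the Ricci-flow equation, Perelman's $\mathscr{L}$-distance $Q(x,\tau_1;y,\tau_2)$ satisfies a specific system of evolution inequalities in the endpoints (the $\mathscr{L}$-analogues of the Hessian/Laplacian comparison estimates): namely, along the backward Ricci flow one has pointwise bounds of the form $\partial_{\tau_1} Q + (\text{Laplacian-type term in } x) \le (\text{controlled terms})$ and likewise in $(y,\tau_2)$, with the $-2n(\sqrt{\tau_1}-\sqrt{\tau_2})^2$ correction in the definition of $\Theta(s)$ precisely chosen to absorb the $2n$-type errors coming from $\mathrm{tr}\,\mathrm{Hess}$ of the $\mathscr{L}$-exponential map and the $R$-term in the $\mathscr{L}$-length. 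I would quote these $\mathscr{L}$-comparison estimates (they are available from Perelman's work and Topping's paper) rather than rederive them.

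The heart of the argument, mirroring the proof of Theorem \ref{main}, is to regularize the problem: replace the (possibly non-smooth) optimal coupling by working with a smooth function whose time-derivative one can estimate via the maximum principle. Concretely, I would fix $s_0$, let $\pi$ be an optimal coupling for $V(\nu_1(\tau_1(s_0)),\tau_1(s_0);\nu_2(\tau_2(s_0)),\tau_2(s_0))$, and study the quantity $\Phi(s) = \int_{M\times M} \widetilde Q_s(x,y)\, d\pi_s(x,y)$ along the diffusions, where $\widetilde Q_s$ is built from $Q(\cdot,\tau_1(s);\cdot,\tau_2(s))$ and the normalization term and $\pi_s$ is transported by the coupled conjugate heat flow. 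Differentiating in $s$ and using that $\tau_i(s) = \bar\tau_i e^s$ so that $\frac{d\tau_i}{ds} = \tau_i$, the contribution splits into: (i) the explicit $s$-derivative of $Q$ and of the normalization, which is controlled by the $\mathscr{L}$-evolution inequalities above; and (ii) the spatial part, where the coupled operator — interpreted as a coupling of the two Laplacians $\Delta_{\tau_1}$ acting in $x$ and $\Delta_{\tau_2}$ acting in $y$ — is estimated from above by the second-derivative (concavity) information on $Q$ along $\mathscr{L}$-geodesics joining $x$ and $y$, exactly as the $\min\{4\eta_{ss},0\}$ term was handled in Theorem \ref{main}. Assembling the two pieces, the backward Ricci flow equation and Perelman's second-variation formula for $\mathscr{L}$ conspire so that the total is $\le 0$, giving $\Phi'(s)\le 0$; evaluating at $s=s_0$ and using that $\pi_{s_0}$ is optimal yields $\Theta'(s_0)\le 0$ in the appropriate (one-sided / distributional) sense, hence $\Theta$ is nonincreasing.

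I expect the main obstacle to be two-fold. First, the regularity issue: $Q$ is only locally Lipschitz (smooth away from the $\mathscr{L}$-cut locus) and the optimal coupling is supported on the $\mathscr{L}$-analogue of a $c$-cyclically monotone set, so to run the maximum principle cleanly one must either work with a doubling-of-variables / inf-convolution regularization or exploit that, $\pi$-a.e., the relevant points are joined by a unique minimizing $\mathscr{L}$-geodesic along which all the needed first and second variation formulas are classical — this is the same technical point that makes Andrew–Clutterbuck's coupling method delicate, and it is where the bulk of the care goes. Second, bookkeeping the $\sqrt{\tau}$-weights and the $-2n(\sqrt{\tau_1}-\sqrt{\tau_2})^2$ normalization: one must verify that the factor $2(\sqrt{\tau_1}-\sqrt{\tau_2})$ multiplying $V$, together with $\frac{d}{ds}(\sqrt{\tau_i}) = \tfrac12\sqrt{\tau_i}$, produces exactly the combination needed to cancel the $n$-dimensional error terms in Perelman's $\mathscr{L}$-Laplacian comparison; this is a routine but unforgiving computation, and getting the constants to line up is the concrete manifestation of "why this particular normalization."
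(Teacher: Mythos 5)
Your geometric input is right: along a minimizing $\mathscr{L}$-geodesic one should sum Perelman's second variation over the $n$ directions $Y_i$ (transported by $\nabla_X Y_i=-Ric(Y_i,\cdot)+\frac{1}{2\tau}Y_i$), combine with the first-variation-in-time identity \eqref{partL}, and the $-2n(\sqrt{\tau_1}-\sqrt{\tau_2})^2$ normalization cancels the trace term $n\sqrt{\tau}\bigl|_{\tau_1}^{\tau_2}$, yielding the pointwise inequality $(-\frac{d^+}{ds}-\mathscr{D})P\ge 0$. That matches the paper. But the bridge you build from this pointwise estimate to the statement about $\Theta$ is different from the paper's, and it has a genuine gap.

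Your plan is primal: take an optimal coupling $\pi$ at $s=s_0$, transport it by a ``coupled conjugate heat flow'' to get $\pi_s$, differentiate $\Phi(s)=\int P\,d\pi_s$, and compare with $\Theta$ using optimality at $s_0$. The missing idea is that no such canonical coupled flow exists and none is constructed. The operator $\mathscr{D}$ in this paper is defined as an \emph{infimum} over all $A\geq 0$ with prescribed diagonal blocks $\tau_1 g_{\tau_1}$ and $\tau_2 g_{\tau_2}$; it is not the generator of a Markov semigroup, and passing from the support-sense inequality $\mathscr{D}P\le\cdots$ to a statement about $\frac{d}{ds}\int P\,d\pi_s$ requires exhibiting a concrete coupling of two diffusions whose generator is dominated by $\mathscr{D}$, together with the regularity control you yourself flag. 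That construction is precisely the hard step in the probabilistic proofs of Kuwada--Philipowski and Arnaudon--Coulibaly--Thalmaier, and your proposal does not supply it.

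The paper avoids this entirely by running the argument on the dual side, exactly as in the proof of Theorem \ref{main}. One fixes $s=b$, takes near-optimal Kantorovich potentials $(\alpha_b,\beta_b)$ for the cost $P(\cdot,\cdot,b)$, and evolves them backward by
\begin{align*}
-\frac{\partial}{\partial s}\varphi=\tau_1(s)\Delta_{\tau_1(s)}\varphi,
\qquad
-\frac{\partial}{\partial s}\psi=\tau_2(s)\Delta_{\tau_2(s)}\psi,
\end{align*}
which are conjugate to the (rescaled) conjugate heat equations governing $\nu_1,\nu_2$, so $J_s(\varphi_s,\psi_s)$ is constant in $s$. The only thing to prove is that $(\varphi_s,\psi_s)$ remain competitive, i.e.\ $\varphi(x,s)+\psi(y,s)\le P(x,y,s)$, and this is exactly where the pointwise inequality $(-\frac{d^+}{ds}-\mathscr{D})P\ge 0$ enters, through the parabolic maximum principle applied to $Z=\varphi+\psi-P+\epsilon(s-b-1)$: at a first interior zero of $Z$, the fact that $\mathscr{D}$ is an infimum over $\mathscr{F}$ containing every admissible $A$ forces $\mathscr{D}Z\le 0$, giving a contradiction. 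Kantorovich duality then converts the preserved constraint into $\Theta(b)\le\Theta(s)$ for $s<b$. Note also that the relevant coupled operator and the potential equations must carry the weights $\tau_i$ (you wrote $\Delta_{\tau_1}$ and $\Delta_{\tau_2}$ without them); this is forced by $\frac{d\tau_i}{ds}=\tau_i$ and is part of the ``unforgiving bookkeeping'' you anticipated. Finally, there is no $\min\{4\eta'',0\}$-type alternative here: unlike the distance case, one only uses the full $n$-frame $\{Y_i\}$, so the longitudinal-reflection trick does not appear in the $\mathscr{L}$ argument.
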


Kuwada and Philipowski gave a new proof based on probabilistic methods in \cite{Kuwada}. We find that our new method also apply to this problem, see Section \ref{Lsec} for details.

\vspace*{2em}

\noindent\textbf{Acknowledgments}.The author would like to express his gratitude to his advisor Professor B.-L. Chen, who brought him this topic and gave him lots of enlightening discussions and encouragement. The author is very grateful to the referees for very careful reading and for critical comments to improve this paper.

\section{A coupling method}\label{couplingsec}

In this section, we will define a time-dependent operator $\mathscr{D}_{\tau}$ which can be viewed as a coupling of two Laplacians, and calculate the action of $\mathscr{D}_{\tau}$ on $\eta(d_\tau(x,y),\tau)$. The definition of $\mathscr{D}_{\tau}$ is inspired by the paper \cite{Ben}.

Suppose $\Omega$ is the set $(M\times{M})\backslash\{(x,x)|x\in{M}\}$. Denote the product metric $g_{\tau}\times{g_{\tau}}$ on $\Omega$ by $\tilde{g}_{\tau}$. Let $\tilde{\nabla}_{\tau}$, $\tilde{\nabla}_{\tau}^{2}$ be the gradient and Hessian with respect to $\tilde{g}_{\tau}$ respectively.

The set $\mathscr{F}_{\tau}\subset Sym_{2}(T^{*}\Omega)$ is defined to be
\[
\begin{split}
\mathscr{F}_{\tau}:=&\biggl\{A\in{Sym_{2}(T^{*}\Omega)}\biggl|A\geq0, A_{(x,y)}|_{T_{x}M\bigotimes{T_{x}M}}=g_{\tau}(x),\\
&A_{(x,y)}|_{T_{y}M\bigotimes{T_{y}M}}=g_{\tau}(y)\biggr\}.
\end{split}
\]
Obviously, $\tilde{g}_{\tau}\in\mathscr{F}_{\tau}$.
Suppose $(x,y)\in\Omega$, $\{E_{i}\}_{1\leq{i}\leq{n}}$ and $\{F_{i}\}_{1\leq{i}\leq{n}}$ are two orthonormal bases (with respect to $g_{\tau}$) defined in small open neighborhoods $U\ni{x}$ and $V\ni{y}$, respectively.
Denote $\{E^{i}_{\ast}\}_{1\leq{i}\leq{n}}$ and $\{F^{i}_{\ast}\}_{1\leq{i}\leq{n}}$ the dual coframes of $\{E_{i}\}_{1\leq{i}\leq{n}}$ and $\{F_{i}\}_{1\leq{i}\leq{n}}$, respectively.
Denote
$
\tilde{A}=\sum_{i=1}^{n}(E_{\ast}^{i},F_{\ast}^{i})\bigotimes(E_{\ast}^{i},F_{\ast}^{i})
$
on ${U}\times{V}$.
Suppose $U'$, $V'$ are open neighborhoods of $x$ and $y$, respectively, such that $\overline{U'}\subset{U}$, $\overline{V'}\subset{V}$.
Let $\alpha:\Omega\rightarrow[0,1]$ be a smooth cutoff function satisfying $\alpha(z)\equiv1$ on $U'\times{V'}$, $\alpha(z)\equiv0$ on $\Omega\setminus({U}\times{V})$.
Define $A_{1}$ to be \[A_{1}=\alpha\tilde{A}+(1-\alpha)\tilde{g}_{\tau},\]
it's easy to see $A_{1}\in\mathscr{F}_{\tau}$. Hence we can extend
$
\sum_{i=1}^{n}(E_{\ast}^{i},F_{\ast}^{i})\bigotimes(E_{\ast}^{i},F_{\ast}^{i})\biggl|_{(x,y)}
$
to an element of $\mathscr{F}_{\tau}$.

We define the operator $\mathscr{D}_{\tau}: C^{2}(\Omega)\rightarrow{C^{0}(\Omega)}$ to be

\[
\mathscr{D}_{\tau}(f(x,y)):=\inf\biggl\{{\textmd{tr}(A(\tilde{\nabla}_{\tau}^{2}f))}\biggl|A\in\mathscr{F}_{\tau}\biggr\}
\]
for ${f}\in{C^{2}(\Omega)}.$
When $f$ is independent of $x$, $\mathscr{D}_{\tau}(f)=\Delta_{\tau}f(y)$.
When $f$ is independent of $y$, $\mathscr{D}_{\tau}(f)=\Delta_{\tau}f(x)$.
Hence $\mathscr{D}_{\tau}$ is a coupling of two Laplacians $\Delta_{\tau}|_{x}$ and $\Delta_{\tau}|_{y}$.

Let $x,y\in{M}, x\neq{y}$, and $d=d_{\tau}(x,y)$. Let $ \gamma:[-\frac{d}{2},\frac{d}{2}]\rightarrow{M}$ be a minimizing geodesic from $x$ to $y$, parameterized by arc length in $(M,g_{\tau})$. Choose an orthonormal basis $\{E_{i}\}_{1\leq{i}\leq{n}}$ for $T_{x}M$, such that $E_{n}=\gamma'(-\frac{d}{2})$. Parallel transportation along $\gamma$ gives an orthonormal basis $\{E_{i}(s)\}_{1\leq{i}\leq{n}}$ for $T_{\gamma(s)}M$ with $E_{n}(s)=\gamma'(s)$.

From the definition of $\mathscr{D}_{\tau}$,
\begin{align}\label{Dineq}
\mathscr{D}_{\tau}f\leq&{\min}\biggl\{\sum\limits_{i=1}^{n}\tilde{\nabla}_{\tau}^{2}f((E_{i}(-\frac{d}{2}),E_{i}(\frac{d}{2})),(E_{i}(-\frac{d}{2}),E_{i}(\frac{d}{2}))),\\
&\sum\limits_{i=1}^{n-1}\tilde{\nabla}_{\tau}^{2}f((E_{i}(-\frac{d}{2}),E_{i}(\frac{d}{2})),(E_{i}(-\frac{d}{2}),E_{i}(\frac{d}{2})))\nonumber\\
&+\tilde{\nabla}_{\tau}^{2}f((E_{n}(-\frac{d}{2}),-E_{n}(\frac{d}{2})),(E_{n}(-\frac{d}{2}),-E_{n}(\frac{d}{2})))\biggr\},\nonumber
\end{align}
where the inequality holds in support sense.

Throughout this paper, the derivative of a Lipschitz function $\frac{d^{+}}{d\tau}f(\tau)$ is interpreted as
\[\frac{d^{+}}{d\tau}f(\tau)=\underset{h\downarrow0}{\limsup}\frac{f(\tau+h)-f(\tau)}{h}.\]
We will denote the $s, \tau$ derivatives of $\eta(s,\tau)$ by $\eta', \dot{\eta}$ respectively for brevity.
\begin{lemm}\label{lemmaaaa}
  Suppose $(M,g_{\tau})$ satisfies (\ref{superKRF}) and $\eta: \mathbb{R}^+\times\mathbb{R}^+\rightarrow\mathbb{R}^+$ satisfies $\eta'(s,\tau)\geq0$. If $c_{\tau}(x,y)=\eta(d_{\tau}(x,y),\tau)$, then
  \[
  (-\frac{d^{+}}{d\tau}-\mathscr{D}_{\tau})c_{\tau}(x,y)\geq-\dot{\eta}+K\eta'd_{\tau}-{\min}\{4\eta'',0\}
  \]
for $(x,y)\in\Omega$.
\end{lemm}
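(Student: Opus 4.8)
The plan is to estimate the two quantities $-\frac{d^+}{d\tau}c_\tau$ and $\mathscr{D}_\tau c_\tau$ separately and then combine them. First I would compute $\frac{d^+}{d\tau}d_\tau(x,y)$. Since $d_\tau(x,y)$ is obtained by minimizing the $g_\tau$-length of curves joining $x$ to $y$, a standard first-variation-of-arclength argument along a fixed minimizing geodesic $\gamma$ (parametrized by $g_\tau$-arclength, as set up in the text) gives
\[
\frac{d^+}{d\tau}d_\tau(x,y)\le\frac12\int_{-d/2}^{d/2}\Bigl(\frac{\partial}{\partial\tau}g_\tau\Bigr)(\gamma'(s),\gamma'(s))\,ds
\]
in the support/barrier sense; because $\eta'\ge0$, the chain rule $\frac{d^+}{d\tau}c_\tau=\dot\eta+\eta'\,\frac{d^+}{d\tau}d_\tau$ then yields an upper bound on $\frac{d^+}{d\tau}c_\tau$ involving $\dot\eta$ and that integral.

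Next I would bound $\mathscr{D}_\tau c_\tau$ from above using inequality (\ref{Dineq}). With $f=\eta(d_\tau(\cdot,\cdot),\tau)$ and the product Hessian $\tilde\nabla_\tau^2$, evaluating $\tilde\nabla_\tau^2 f$ on the vectors $(E_i(-d/2),E_i(d/2))$ for $i\le n-1$ and on $(E_n(-d/2),-E_n(d/2))$ requires the second variation of arclength. For the tangential direction $i=n$, the curve $s\mapsto(\gamma(s-t/2),\gamma(s+t/2))$ shows $d_\tau$ varies to second order like constant (the two endpoints slide along the geodesic), contributing $\eta''\cdot(\text{something})$ with the factor that produces the $4\eta''$; here one keeps $\min\{4\eta'',0\}$ precisely because the sign of $\eta''$ is not controlled, so one needs the term only when $\eta''<0$. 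For the normal directions $i\le n-1$, the second variation formula for geodesics gives
\[
\sum_{i=1}^{n-1}\tilde\nabla_\tau^2 d_\tau\bigl((E_i(-\tfrac d2),E_i(\tfrac d2)),(E_i(-\tfrac d2),E_i(\tfrac d2))\bigr)\le -\int_{-d/2}^{d/2}\mathrm{Ric}(\gamma'(s),\gamma'(s))\,ds
\]
by the usual index-form comparison (Jacobi fields equal to $E_i(s)$ at the endpoints are admissible competitors in the second variation, and $d_\tau$ being the \emph{distance} makes the true Hessian no larger), while the first-derivative cross terms of $\eta$ vanish in these directions by first-variation orthogonality. Assembling, $\mathscr{D}_\tau c_\tau\le \eta''\cdot(\text{tangential part, giving }4\eta'') - \eta'\int \mathrm{Ric}(\gamma',\gamma')\,ds + (\text{lower order})$, and after taking $\min\{4\eta'',0\}$ one gets $\mathscr{D}_\tau c_\tau\le \min\{4\eta'',0\}-\eta'\int_{-d/2}^{d/2}\mathrm{Ric}(\gamma'(s),\gamma'(s))\,ds$.

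Finally I would subtract: using $\eta'\ge0$ together with the curvature hypothesis (\ref{superKRF}), namely $-\frac{\partial}{\partial\tau}g_\tau+2\,\mathrm{Ric}\ge 2Kg_\tau$, evaluated on $\gamma'(s)$ (a unit vector) and integrated over $s\in[-d/2,d/2]$ of total length $d=d_\tau(x,y)$, one gets
\[
-\frac12\int\Bigl(\tfrac{\partial}{\partial\tau}g_\tau\Bigr)(\gamma',\gamma')\,ds+\int\mathrm{Ric}(\gamma',\gamma')\,ds\ \ge\ K d_\tau(x,y),
\]
so $\bigl(-\frac{d^+}{d\tau}-\mathscr{D}_\tau\bigr)c_\tau\ge -\dot\eta+\eta'\bigl(-\frac12\int(\partial_\tau g_\tau)(\gamma',\gamma')+\int\mathrm{Ric}(\gamma',\gamma')\bigr)-\min\{4\eta'',0\}\ge -\dot\eta+K\eta' d_\tau-\min\{4\eta'',0\}$, as claimed. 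I expect the main obstacle to be the careful bookkeeping of the Hessian estimate (\ref{Dineq}) — specifically, justifying the second-variation comparison for $\tilde\nabla_\tau^2 d_\tau$ in the support sense at points where $d_\tau$ may fail to be smooth (cut locus), keeping track of the exact numerical constant $4$ coming from the reparametrization of the tangential variation, and checking that the first-order terms in the normal directions genuinely vanish so that only $\dot\eta$, $\eta'$, and $\eta''$ survive.
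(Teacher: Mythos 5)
Your proposal is correct and follows essentially the same approach as the paper: differentiate the $g_\tau$-length of a fixed minimizing geodesic to bound $\frac{d^+}{d\tau}d_\tau$, bound $\mathscr{D}_\tau c_\tau$ from above via (\ref{Dineq}) using first and second variation of arclength (normal directions yielding the Ricci integral, tangential direction yielding $\min\{4\eta'',0\}$), and combine with (\ref{superKRF}). The one detail worth tidying is the tangential part of (\ref{Dineq}): the $0$ in $\min\{4\eta'',0\}$ comes from the competitor $(E_n,E_n)$, in which both endpoints slide forward along the extended geodesic so the chord length stays $\leq d$ and the support Hessian of $c_\tau$ is $\leq 0$, whereas $4\eta''$ comes from $(E_n,-E_n)$, in which the endpoints move symmetrically inward so $d\mapsto d-2r$; your written variation $(\gamma(s-t/2),\gamma(s+t/2))$ conflates the two pictures and would not produce the factor $4$ without rescaling — but you already flag the constant and the cut-locus issue as bookkeeping to verify.
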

\begin{proof}

Let $\gamma:[-\frac{d}{2},\frac{d}{2}]\rightarrow{M}$ be any minimizing geodesic connecting $x$ and $y$ parameterized by arc length in $(M,g_{\tau})$. Let $\{E_{i}(s)\}_{1\leq{i}\leq{n}}$ be a parallel orthonormal basis for $T_{\gamma(s)}M$ with $E_{n}(s)=\gamma'(s)$.
For $i\in\{1,2,\ldots,n-1\}$, let $\gamma_{i}:(-\epsilon,\epsilon)\times[-\frac{d}{2},\frac{d}{2}]\rightarrow{M}$ be the variation $\gamma_{i}(r,s)=\exp_{\gamma(s)}(rE_{i})$, then  \[d_{\tau}(\exp_{x}(rE_{i}),\exp_{y}(rE_{i}))\leq{L_{\tau}[\gamma_{i}(r,\cdot)]},\]
with equality at $r=0$, where $L_{\tau}[\gamma]$ means the length of $\gamma$ with respect to $g_{\tau}$. Since $\eta(s,\tau)$ is nondecreasing in the first variable, we have
\[c_{\tau}(\exp_{x}(rE_{i}),\exp_{y}(rE_{i}))\leq{\eta}(L_{\tau}[\gamma_{i}(r,\cdot)],\tau),\]
with equality at $r=0$. By the first variation formula,
\[\frac{d}{dr}\biggl|_{r=0}L_{\tau}[\gamma_{i}(r,\cdot)]=0. \]
By the second variation formula,
    \[
    \begin{split}
    &\frac{d^{2}}{dr^{2}}\biggl|_{r=0}L_{\tau}[\gamma_{i}(r,\cdot)]\\
    =&\int^{\frac{d}{2}}_{-\frac{d}{2}}\{|\nabla_{\gamma'}E_{i}|^{2}-R(\gamma',E_{i},E_{i},\gamma')-(\frac{\partial}{\partial{s}}\langle{E_{i},\gamma'\rangle})^{2}\}ds+\langle\nabla_{E_{i}}E_{i},\gamma'\rangle\biggl|^{\frac{d}{2}}_{-\frac{d}{2}}\\
    =&-\int^{\frac{d}{2}}_{-\frac{d}{2}}R(\gamma',E_{i},E_{i},\gamma')ds.
    \end{split}
    \]
Therefore,
  \begin{align}\label{eq1}
  \sum^{n-1}_{i=1}\tilde{\nabla}_{\tau}^{2}c((E_{i},E_{i}),(E_{i},E_{i}))
  \leq&\sum^{n-1}_{i=1}\frac{d^{2}}{dr^{2}}\biggl|_{r=0}\eta(L_{\tau}[\gamma_{i}(r,\cdot)],\tau)\\
  =&\sum^{n-1}_{i=1}\biggl[\eta''\biggl(\frac{d}{dr}\biggl|_{r=0}L_{\tau}\biggl)^{2}+\eta'\frac{d^{2}}{dr^{2}}\biggl|_{r=0}L_{\tau}\biggl]\nonumber\\
  =&-\eta'\int^{\frac{d}{2}}_{-\frac{d}{2}}Ric(\gamma',\gamma')ds.\nonumber
  \end{align}
Similarly, if we extend $\gamma$ a little to get a longer geodesic and we will still denote it by $\gamma:[-\frac{d}{2},\frac{d}{2}+\delta]\rightarrow{M}$, then
\[
  d_{\tau}(\gamma(-\frac{d}{2}+r),\gamma(\frac{d}{2}+r))
  \leq{L_{\tau}[\gamma|_{(-\frac{d}{2}+r,\frac{d}{2}+r)}]}\equiv{d},
\]
with equality at $r=0$.
Therefore,
\[c_{\tau}(\gamma(-\frac{d}{2}+r),\gamma(\frac{d}{2}+r))\leq{\eta}(d,\tau).\]
Hence
  \begin{align}\label{eq2}
    \tilde{\nabla}_{\tau}^{2}c((E_{n},E_{n}),(E_{n},E_{n}))\leq0.
  \end{align}
It is easy to see that
  \begin{align}\label{eq3}
  &\tilde{\nabla}_{\tau}^{2}c((E_{n},-E_{n}),(E_{n},-E_{n}))\\
  =&\frac{d^{2}}{dr^{2}}\biggl|_{r=0}\eta(d-2r,\tau)=4\eta''.\nonumber
  \end{align}
Combining $(\ref{Dineq}), (\ref{eq1}), (\ref{eq2}), (\ref{eq3})$, we get
\begin{align}\label{equ2}
  \mathscr{D}_{\tau}c_{\tau}(x,y)\leq{-}\eta'\int^{\frac{d}{2}}_{-\frac{d}{2}}Ric(\gamma',\gamma')ds+\min\{4\eta'',0\}.
\end{align}
On the other hand, we get
\begin{align}
\frac{d^{+}}{d\tau}d_{\tau}(x,y)&=\underset{h\downarrow0}{\limsup}\frac{d_{\tau+h}(x,y)-d_{\tau}(x,y)}{h}\\
&\leq\underset{h\downarrow0}{\limsup}\frac{L_{\tau+h}[\gamma]-L_{\tau}[\gamma]}{h}\nonumber\\
&=\frac{d}{d\tau}L_{\tau}[\gamma]=\frac{1}{2}{\int^{\frac{d}{2}}_{-\frac{d}{2}}}{\frac{\partial}{\partial\tau}g(\gamma',\gamma')}ds.\nonumber
\end{align}
Therefore,
\begin{align}\label{equ1}
  -\frac{d^{+}}{d\tau}c_{\tau}(x,y)&=\eta'{\biggl(-\frac{d^{+}}{d\tau}d_{\tau}(x,y)\biggl)}-\dot{\eta}\\
   &\geq\frac{\eta'}{2}\biggl({{\int^{\frac{d}{2}}_{-\frac{d}{2}}}{-\frac{\partial}{\partial\tau}g(\gamma',\gamma')}ds}\biggl)-\dot{\eta}.\nonumber
\end{align}
Combining $(\ref{equ2}), (\ref{equ1})$, we get
  \[
  \begin{split}
  &(-\frac{d^{+}}{d\tau}-\mathscr{D}_{\tau})c_{\tau}(x,y)\\
  \geq&\frac{\eta'}{2}{\int^{\frac{d}{2}}_{-\frac{d}{2}}}\biggl(-\frac{\partial}{\partial\tau}g+2Ric\biggl)(\gamma',\gamma')ds-\dot{\eta}-\min\{4\eta'',0\}\\
  \geq&{Kd\eta'-\dot{\eta}-\min\{4\eta'',0\}},
  \end{split}
  \]
where we have used (\ref{superKRF}) in the last inequality.
So we have completed the proof.
\end{proof}

\section{Proof of Theorem \ref{main}}

The following lemma of Kantorovich duality is well known to the experts.

\begin{lemm}[Kantorovich duality, see \cite{Villani}]\label{dual}
  Suppose $M$ is a manifold. Let $\mu,\nu\in{P(M)}$, and $c: M\times{M}\rightarrow\mathbb{R}\bigcup\{+\infty\}$ be a lower semi-continuous cost function. Define
\begin{align}
  J(\varphi,\psi)=\int_{M}\varphi{d\mu}+\int_{M}\psi{d\nu}
\end{align}
for $(\varphi,\psi)\in{L}^{1}(d\mu)\times{L}^{1}(d\nu)$.
Let $\Phi_{c}$ be the set of all $(\varphi,\psi)\in{L}^{1}(d\mu)\times{L}^{1}(d\nu)$ satisfying
\begin{align}
  \phi(x)+\psi(y)\leq{c(x,y)}
\end{align}
for $d\mu$-almost all $x\in{M}$, $d\nu$-almost all $y\in{M}$. Then
\begin{align}
    \mathrm{T}_c(\mu,\nu)=\underset{(\varphi,\psi)\in\Phi_{c}}{\sup}J(\varphi,\psi).
\end{align}
\end{lemm}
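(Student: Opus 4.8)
The plan is to establish the two inequalities $\sup_{\Phi_c}J\le\mathrm{T}_c(\mu,\nu)$ and $\mathrm{T}_c(\mu,\nu)\le\sup_{\Phi_c}J$ separately, following the convex-duality argument of \cite{Villani}; the first is elementary while the second rests on the Fenchel--Rockafellar theorem.

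For the easy inequality I would fix $\pi\in\Gamma(\mu,\nu)$ and $(\varphi,\psi)\in\Phi_c$, observe that the exceptional set $\{(x,y):\varphi(x)+\psi(y)>c(x,y)\}$ is contained in $(A\times M)\cup(M\times B)$ with $A$ $\mu$-null and $B$ $\nu$-null, hence is $\pi$-null, and then integrate the pointwise ($\pi$-a.e.) inequality $\varphi(x)+\psi(y)\le c(x,y)$ against $\pi$. Using the marginal conditions this gives $J(\varphi,\psi)=\int_{M\times M}(\varphi(x)+\psi(y))\,d\pi\le\int_{M\times M}c\,d\pi$; taking $\inf$ over $\pi$ and then $\sup$ over $(\varphi,\psi)$ yields the bound.

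For the reverse inequality I would (assuming $M$ compact, which is the relevant case here; the general Polish-space statement follows by the same method with the more careful duality theory of \cite{Villani}) work in the Banach space $E=C(M\times M)$ with sup norm, whose dual is $\mathcal{M}(M\times M)$ by Riesz representation. Define convex functions $\Theta,\Xi:E\to\mathbb{R}\cup\{+\infty\}$ by $\Theta(u)=0$ if $u(x,y)\ge-c(x,y)$ everywhere and $+\infty$ otherwise, and $\Xi(u)=\int\varphi\,d\mu+\int\psi\,d\nu$ if $u(x,y)=\varphi(x)+\psi(y)$ for some $\varphi,\psi\in C(M)$ (well defined since $\mu,\nu$ are probability measures) and $+\infty$ otherwise. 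By construction $\inf_{u\in E}(\Theta(u)+\Xi(u))=-\sup\{J(\varphi,\psi):\varphi,\psi\in C(M),\ \varphi(x)+\psi(y)\le c(x,y)\}$. Since a lower semicontinuous $c$ on the compact $M\times M$ is bounded below, a large constant $u_0$ lies in $\mathrm{dom}\,\Xi$ and in $\mathrm{dom}\,\Theta$ where $\Theta$ is continuous, so Fenchel--Rockafellar applies:
$$\inf_{u\in E}\bigl(\Theta(u)+\Xi(u)\bigr)=\max_{\pi\in\mathcal{M}(M\times M)}\bigl(-\Theta^*(-\pi)-\Xi^*(\pi)\bigr).$$
A direct computation gives $\Xi^*(\pi)=0$ if $\pi$ has marginals $\mu,\nu$ and $+\infty$ otherwise, and $\Theta^*(-\pi)=+\infty$ unless $\pi\ge0$, in which case $\Theta^*(-\pi)=\sup\{\int u\,d\pi:u\in C(M\times M),\,u\le c\}=\int c\,d\pi$. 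Hence the right-hand side equals $-\inf_{\pi\in\Gamma(\mu,\nu)}\int c\,d\pi=-\mathrm{T}_c(\mu,\nu)$, so $\sup\{J(\varphi,\psi):(\varphi,\psi)\in\Phi_c,\ \varphi,\psi\ \text{continuous}\}=\mathrm{T}_c(\mu,\nu)$; as continuous pairs belong to $\Phi_c\subset L^1(d\mu)\times L^1(d\nu)$, this gives $\sup_{\Phi_c}J\ge\mathrm{T}_c$, and together with the easy inequality, equality.

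The main obstacle is the duality step itself: first, identifying the dual of $E$ with measures, which is clean only because $M$ (hence $M\times M$) is compact --- for a genuinely non-compact manifold one must either restrict to the compact setting used throughout this paper or invoke the full Polish-space duality; and second, evaluating $\Theta^*(-\pi)$ correctly when $c$ is merely lower semicontinuous and possibly $+\infty$-valued, which is exactly the place where one uses that such a $c$ is an increasing limit of bounded continuous functions together with the monotone convergence theorem (allowing the value $+\infty$).
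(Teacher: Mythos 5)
Your proof is correct, and it is essentially the argument the paper relies on: the paper itself gives no proof of this lemma, citing Villani, and your two-step argument (the elementary inequality by integrating a competitive pair against any transference plan, plus the Fenchel--Rockafellar duality in $C(M\times M)$ with the conjugate computations $\Xi^{*}$ and $\Theta^{*}$, using that a lower semicontinuous cost on a compact space is bounded below and an increasing limit of continuous functions) is exactly the standard proof in that reference, correctly adapted to the compact setting used throughout the paper.
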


A pair of functions $(\varphi,\psi)\in{\Phi_{c}}$ is said to be competitive.

Now we begin the proof of Theorem \ref{main}, using Lemma \ref{dual} together with the ideas of Section 3 in \cite{Ben}.
\begin{proof}[Proof of Theorem \ref{main}]

Suppose $b\in(\tau_{1},\tau_{2})$. For any $\varepsilon>0$, there exists $(\alpha_{b},\beta_{b})$ such that $\alpha_{b}(x)+\beta_{b}(y)\leq{c_{b}(x,y)}$ and $J_{b}(\alpha_{b},\beta_{b})>T_{c_{b}}(\mu,\nu)-\epsilon$. We solve the equations
\begin{align}
  \left\{
     \begin{array}{ll}
       -\frac{\partial\varphi}{\partial\tau}=\Delta_{\tau}\varphi,&\tau\in(\tau_{1},b)\\
        \varphi(x,b)=\alpha_{b}(x),
     \end{array}
   \right.
\end{align}
\begin{align}
  \left\{
     \begin{array}{ll}
       -\frac{\partial\psi}{\partial\tau}=\Delta_{\tau}\psi,&\tau\in(\tau_{1},b)\\
        \psi(x,b)=\beta_{b}(x).
     \end{array}
   \right.
\end{align}
Since
\[
\frac{d}{d\tau}\int_{M}\varphi{u}dV_{\tau}=\int_{M}\biggl[\frac{\partial\varphi}{\partial\tau}u+\varphi(\frac{\partial{u}}{\partial\tau})+\frac{1}{2}\textmd{tr}(\frac{\partial{g}}{\partial\tau})\varphi{u}\biggr]dV_{\tau}\\
=0
\]
and $\frac{d}{d\tau}\int_{M}\psi{v}dV_{\tau}=0$, we have
\[\frac{d}{d\tau}J_{\tau}(\varphi(\cdot,\tau),\psi(\cdot,\tau))=0.\]
If we can prove $\varphi(x,\tau)+\psi(y,\tau)\leq{c}_{\tau}(x,y)$ for every $\tau\in(\tau_{1},b)$, then
\[
\begin{split}
T_{c_{b}}(\mu,\nu)\leq&{J_{b}(\alpha_{b},\beta_{b})+\epsilon}\\
=&J_{\tau}(\varphi_{\tau},\psi_{\tau})+\epsilon\\
\leq&T_{c_{\tau}}(\mu,\nu)+\epsilon.
\end{split}
\]
By the arbitrariness of $\epsilon$, we have $T_{c_{b}}(\mu,\nu)\leq{T}_{c_{\tau}}(\mu,\nu), \forall\tau\in(\tau_{1},b),$
i.e. $T_{c_{\tau}}(\mu,\nu)$ is nonincreasing in $\tau$.

In the following, we will prove $\varphi(x,\tau)+\psi(y,\tau)\leq{c}_{\tau}(x,y)=\eta(d_{\tau}(x,y),\tau)$ for $\tau\in(\tau_{1},b)$.
Define an evolving quantity $Z$ on $M\times{M}\times(\tau_{1},b)$: \[Z(x,y,\tau)=\varphi(x,\tau)+\psi(y,\tau)-{c}_{\tau}(x,y)+\epsilon(\tau-b-1).\]

Notice that since $\eta(s,\tau)$ satisfies $(\ref{condi_c})$, we have $(-\frac{d^{+}}{d\tau}-\mathscr{D}_{\tau}){c}_{\tau}(x,y)\geq0$ by Lemma \ref{lemmaaaa}.

We now apply the operator $-\frac{d^{+}}{d\tau}-\mathscr{D}_{\tau}$ to $Z$ :
\[
\begin{split}
&(-\frac{d^{+}}{d\tau}-\mathscr{D}_{\tau})Z(x,y,\tau)\\
=&-\frac{\partial}{\partial\tau}\varphi(x,\tau)-\frac{\partial}{\partial\tau}\psi(y,\tau)+\frac{d^{+}}{d\tau}{c}_{\tau}(x,y)-\epsilon\\
&-\Delta_{\tau}\varphi(x,\tau)-\Delta_{\tau}\psi(y,\tau)+\mathscr{D}_{\tau}{c}_{\tau}(x,y)\\
=&-\epsilon+(\frac{d^{+}}{d\tau}+\mathscr{D}_{\tau}){c}_{\tau}(x,y)\\
\leq&-\epsilon<0.
\end{split}
\]

By assumption, we have $Z(x,y,b)\leq-\epsilon<0$. On the other hand, let $f(x,\tau)=\varphi(x,\tau)+\psi(x,\tau)$, then $f$ satisfies $-\frac{\partial{f}}{\partial\tau}=\Delta_{\tau}f$, with $f(x,b)=\varphi(x,b)+\psi(x,b)\leq{\eta(0,b)}=0$. By maximum principle, we have $f(x,\tau)\leq0=\eta(0,\tau), \forall\tau\in(\tau_{1},b)$. So $Z(x,x,\tau)<0, \forall\tau\in(\tau_{1},b)$. It follows that if $Z$ ever becomes positive, then there exists a maximal $\tau_{0}<b$ and $x_{0}\neq{y_{0}}$ in $M$ such that $Z(x_{0},y_{0},\tau_{0})=0$. Then at $(x_{0},y_{0},\tau_{0})$,
\[\frac{d^{+}}{d\tau}Z\leq0,\qquad\mathscr{D}_{\tau}Z\leq0.\]
Therefore, $(-\frac{d^{+}}{d\tau}-\mathscr{D}_{\tau})Z(x_{0},y_{0},\tau_{0})\geq0$. We get a contradiction. Hence $Z(x,y,\tau)\leq0$. By the arbitrariness of $\epsilon$, we get $\varphi(x,\tau)+\psi(y,\tau)\leq{c}_{\tau}(x,y)$ and finish the proof.
\end{proof}

\section{L-optimal transportation}\label{Lsec}

Before starting our new proof of Theorem \ref{Loptimal}, we recall some basic theory of Perelman's $\mathscr{L}$-length. The readers can refer to \cite{CaoZhu} \cite{Chow} \cite{Kleiner} \cite{Perelman} for more details and further results.
Suppose we have the backward Ricci flow, $\frac{\partial}{\partial\tau}g_{\tau}=2Ric(g_{\tau})$.
For a curve $\gamma:[\tau_{1},\tau_{2}]\rightarrow M$ with $\tau_{2}>\tau_{1}>0$, denote $X(\tau)=\gamma'(\tau)$, and let $Y(\tau)$ be a smooth vector field along $\gamma(\tau)$. The first variation formula of $\mathscr{L}$-length is:
\[
  \delta_{Y}[\mathscr{L}]=2\sqrt{\tau}\langle X,Y\rangle\biggl|_{\tau_{1}}^{\tau_{2}}+\int_{\tau_{1}}^{\tau_{2}}\sqrt{\tau}\langle Y,\nabla R-2\nabla_{X}X-4Ric(\cdot,X)-\frac{1}{\tau}X\rangle d\tau,
\]
where $\langle\cdot,\cdot\rangle$ denotes the inner product with respect to $g_{\tau}$.

A smooth curve $\gamma(\tau)$ in M is called an $\mathscr{L}$-geodesic if it satisfies the following $\mathscr{L}$-geodesic equation:
\begin{align}\label{L-geo-eq}
  2\nabla_{X}X-\nabla R+4Ric(\cdot,X)+\frac{1}{\tau}X=0.
\end{align}

Along an $\mathscr{L}$-geodesic $\gamma$, we have the second variation formula
\begin{align}
  \delta^{2}_{Y}[\mathscr{L}]=2\sqrt{\tau}\langle \nabla_{Y}Y,X\rangle\biggl|_{\tau_{1}}^{\tau_{2}}+\int_{\tau_{1}}^{\tau_{2}}\sqrt{\tau}\biggl[2|\nabla_{X}Y|^{2}+2\langle R(Y,X)Y,X\rangle\\+\textmd{Hess}R(Y,Y)
  +2\nabla_{X}Ric(Y,Y)-4\nabla_{Y}Ric(Y,X)\biggr]d\tau.\nonumber
\end{align}

Suppose $\gamma:[\tau_{1},\tau_{2}]\rightarrow M$ is a shortest $\mathscr{L}$-geodesic connecting $(x,\tau_{1})$ and $(y,\tau_{2})$.
For $(\hat{x},\hat{\tau}_{1})$, $(\hat{y},\hat{\tau}_{2})$ near $(x,\tau_{1})$ and $(y,\tau_{2})$ respectively,
denote $\mathscr{L}(\hat{x},\hat{\tau}_{1};\hat{y},\hat{\tau}_{2})$ the $\mathscr{L}$-length of the $\mathscr{L}$-geodesic $\gamma_{(\hat{x},\hat{\tau}_{1};\hat{y},\hat{\tau}_{2})}$ connecting $(\hat{x},\hat{\tau}_{1})$ and $(\hat{y},\hat{\tau}_{2})$ near $\gamma$, then
$\mathscr{L}(\hat{x},\hat{\tau}_{1};\hat{y},\hat{\tau}_{2})\geq{Q}(\hat{x},\hat{\tau}_{1};\hat{y},\hat{\tau}_{2})$, with equality when $(\hat{x},\hat{\tau}_{1};\hat{y},\hat{\tau}_{2})=(x,\tau_{1};y,\tau_{2})$.
By the computations similar to Perelman's \cite{Perelman} (see also Lemma A.6 in \cite{Topping}), we can derive
\begin{align}\label{partL}
  &\tau_{1}\frac{\partial}{\partial\tau_{1}}\mathscr{L}(x,\tau_{1};y,\tau_{2})+\tau_{2}\frac{\partial}{\partial\tau_{2}}\mathscr{L}(x,\tau_{1};y,\tau_{2})\\
  =&2\tau_{2}^{\frac{3}{2}}R(y,\tau_{2})-2\tau_{1}^{\frac{3}{2}}R(x,\tau_{1})+\mathscr{K}-\frac{1}{2}\mathscr{L}(x,\tau_{1};y,\tau_{2}),\nonumber
\end{align}
where $\mathscr{K}:=\int_{\tau_{1}}^{\tau_{2}}\tau^{\frac{3}{2}}H(X(\tau))d\tau$, and $H(X)$ is Hamilton's trace Harnack quantity (with $t=-\tau$)
\[H(X):=-\frac{\partial{R}}{\partial\tau}-\frac{1}{\tau}R-2\langle\nabla R,X\rangle+2Ric(X,X).\]

Now we define an operator $\mathscr{D}$ which is a coupling of $\tau_{1}\Delta_{\tau_{1}}|_{x}$ and $\tau_{2}\Delta_{\tau_{2}}|_{y}$ as in Section \ref{couplingsec}:
\[
\begin{split}
\mathscr{D}(f(x,\tau_{1};y,\tau_{2}))&:=\inf\biggl\{{\textmd{tr}(A(\tilde{\nabla}^{2}f))}\biggl| A\in{Sym_{2}(T^{*}(M\times{M}))},A\geq0,\\ &A_{(x,y)}|_{T_{x}M\bigotimes{T_{x}M}}=\tau_{1}g_{\tau_{1}}(x),
A_{(x,y)}|_{T_{y}M\bigotimes{T_{y}M}}=\tau_{2}g_{\tau_{2}}(y)\biggr\}
\end{split}
\]
for $f: M\times{I}\times{M}\times{I}\rightarrow\mathbb{R}$, where $\tilde{\nabla}^{2}$ means the Hessian with respect to the product metric of $g_{\tau_{1}}$ and $g_{\tau_{2}}$.

Suppose $\gamma:[\tau_{1},\tau_{2}]\rightarrow M$ is a shortest $\mathscr{L}$-geodesic connecting $(x,\tau_{1})$ and $(y,\tau_{2})$.
Let $\{Y_{i}\}_{1\leq i\leq n}$ be a basis at $\gamma(\tau_{1})$ with $\langle Y_{i},Y_{j}\rangle_{g_{\tau_{1}}}=\tau_{1}\delta_{ij}$. We extend this basis along $\gamma$ to get a family of bases $\{Y_{i}(\tau)\}_{1\leq i\leq n}$ by solving the ODEs
\[\nabla_{X}Y_{i}=-Ric(Y_{i},\cdot)+\frac{1}{2\tau}Y_{i}.\]
From
\[\frac{d}{d\tau}\langle Y_{i},Y_{j}\rangle=2Ric(Y_{i},Y_{j})+\langle \nabla_{X}Y_{i},Y_{j}\rangle+\langle Y_{i},\nabla_{X}Y_{j}\rangle=\frac{1}{\tau}\langle Y_{i},Y_{j}\rangle,\]
it follows
\[\langle Y_{i}(\tau),Y_{j}(\tau)\rangle_{g_{\tau}}=\tau\delta_{ij}.\]
By the definition of $\mathscr{D}$, we have
\[\mathscr{D}f\leq\sum_{i=1}^{n}\tilde{\nabla}^{2}f((Y_{i}(\tau_{1}),Y_{i}(\tau_{2})),(Y_{i}(\tau_{1}),Y_{i}(\tau_{2}))),\]
where the above inequality holds in support sense.

For any $i\in\{1,\ldots,n\}$,
let $\gamma_{i}:(-\epsilon,\epsilon)\times[\tau_{1},\tau_{2}]\rightarrow{M}$, $\gamma_{i}(r,s)=\exp_{\gamma(s)}(rY_{i})$ be a variation of $\gamma$, then  \[Q(\exp_{x}(rY_{i}),\tau_{1};\exp_{y}(rY_{i}),\tau_{2})\leq{\mathscr{L}[\gamma_{i}(r,\cdot)]},\]
with equality at $r=0$. Hence

\begin{align}
  &\tilde{\nabla}^{2}Q((Y_{i}(\tau_{1}),Y_{i}(\tau_{2})),(Y_{i}(\tau_{1}),Y_{i}(\tau_{2})))\nonumber\\
    \leq&\frac{d^{2}}{dr^{2}}\biggl|_{r=0}\mathscr{L}[\gamma_{i}(r,\cdot)]\nonumber\\
    =&\int_{\tau_{1}}^{\tau_{2}}\sqrt{\tau}\biggl[2|\nabla_{X}Y_{i}|^{2}+2\langle R(Y_{i},X)Y_{i},X\rangle+\textmd{Hess}R(Y_{i},Y_{i})+2\nabla_{X}Ric(Y_{i},Y_{i})\nonumber\\
    &-4\nabla_{Y_{i}}Ric(Y_{i},X)\biggr]d\tau\nonumber\\
    =&-2\sqrt{\tau}Ric(Y_{i},Y_{i})\biggl|_{\tau_{1}}^{\tau_{2}}+\sqrt{\tau}\biggl|_{\tau_{1}}^{\tau_{2}}-\int_{\tau_{1}}^{\tau_{2}}\sqrt{\tau}\biggl[-2\langle R(Y_{i},X)Y_{i},X\rangle-\textmd{Hess}R(Y_{i},Y_{i})\nonumber\\
  &-4\nabla_{X}Ric(Y_{i},Y_{i})+4\nabla_{Y_{i}}Ric(X,Y_{i})-\frac{1}{\tau}Ric(Y_{i},Y_{i})-2\frac{\partial}{\partial\tau}Ric(Y_{i},Y_{i})+2|Ric(Y_{i},\cdot)|^{2}\biggl]d\tau,\nonumber
\end{align}
where in the last equality we have left out the well known computations due to Perelman \cite{Perelman}. The readers can also consult \cite{CaoZhu}, \cite{Chow}, \cite{Kleiner} for details of the computations.

Summing over $i$, we get
\begin{align}\label{llllll}
  \mathscr{D}Q
  \leq&n\sqrt{\tau}\biggl|_{\tau_{1}}^{\tau_{2}}-2\tau^{\frac{3}{2}}R\biggl|_{\tau_{1}}^{\tau_{2}}-\int_{\tau_{1}}^{\tau_{2}}\tau^{\frac{3}{2}}H(X)d\tau\\
  =&n(\sqrt{\tau_{2}}-\sqrt{\tau_{1}})-(2\tau_{2}^{\frac{3}{2}}R(y,\tau_{2})-2\tau_{1}^{\frac{3}{2}}R(x,\tau_{1}))-\mathscr{K}.\nonumber
\end{align}

Now we begin our new proof of Theorem \ref{Loptimal}.
\begin{proof}[Proof of Theorem \ref{Loptimal}]
Suppose the backward Ricci flow is defined on an open interval $I$ containing $[\bar{\tau}_{1},\bar{\tau}_{2}]$, where $0<\bar{\tau}_{1}<\bar{\tau}_{2}$.
Let $\tau_{1}=\tau_{1}(s):=\bar{\tau}_{1}e^{s}$, $\tau_{2}=\tau_{2}(s):=\bar{\tau}_{2}e^{s}$, and \[P(x,y,s):=2(\sqrt{\tau_{2}}-\sqrt{\tau_{1}})Q(x,\tau_{1};y,\tau_{2})-2n(\sqrt{\tau_{2}}-\sqrt{\tau_{1}})^{2}.\]
If $\gamma(\tau)$ is a shortest $\mathscr{L}$-geodesic connecting $(x,\tau_{1})$ and $(y,\tau_{2})$, then by $(\ref{llllll})$,
\begin{align}\label{Lcase1}
  \mathscr{D}P(x,y,s)\leq2(\sqrt{\tau_{2}}-\sqrt{\tau_{1}})[n(\sqrt{\tau_{2}}-\sqrt{\tau_{1}})-(2\tau_{2}^{\frac{3}{2}}R(y,\tau_{2})-2\tau_{1}^{\frac{3}{2}}R(x,\tau_{1}))-\mathscr{K}].
\end{align}

On the other hand, by $(\ref{partL})$,
\[
\begin{split}
\frac{d^{+}}{ds}Q(x,\tau_{1};y,\tau_{2})&:=\underset{h\downarrow0}{\limsup}\frac{Q(x,\tau_{1}(s+h);y,\tau_{2}(s+h))-Q(x,\tau_{1}(s);y,\tau_{2}(s))}{h}\\
&\leq\frac{d}{ds}\mathscr{L}(x,\tau_{1}(s);y,\tau_{2}(s))\\
&=\tau_{1}\frac{\partial}{\partial\tau_{1}}\mathscr{L}(x,\tau_{1};y,\tau_{2})+\tau_{2}\frac{\partial}{\partial\tau_{2}}\mathscr{L}(x,\tau_{1};y,\tau_{2})\\
&=2\tau_{2}^{\frac{3}{2}}R(y,\tau_{2})-2\tau_{1}^{\frac{3}{2}}R(x,\tau_{1})+\mathscr{K}-\frac{1}{2}Q(x,\tau_{1};y,\tau_{2}).
\end{split}
\]
Therefore, we have
\begin{align}\label{Lcase2}
  &\frac{d^{+}}{ds}P(x,y,s)\\
  =&2(\sqrt{\tau_{2}}-\sqrt{\tau_{1}})(\frac{d^{+}}{ds}Q(x,\tau_{1};y,\tau_{2}))\nonumber\\
  &+(\sqrt{\tau_{2}}-\sqrt{\tau_{1}})Q(x,\tau_{1};y,\tau_{2})-2n(\sqrt{\tau_{2}}-\sqrt{\tau_{1}})^{2}\nonumber\\
  \leq&2(\sqrt{\tau_{2}}-\sqrt{\tau_{1}})[2\tau_{2}^{\frac{3}{2}}R(y,\tau_{2})-2\tau_{1}^{\frac{3}{2}}R(x,\tau_{1})+\mathscr{K}-n(\sqrt{\tau_{2}}-\sqrt{\tau_{1}})].\nonumber
\end{align}
Combining $(\ref{Lcase1}), (\ref{Lcase2})$, we get
\begin{align}\label{P-inequa}
(-\frac{d^{+}}{ds}-\mathscr{D})P(x,y,s)\geq0.
\end{align}
It is easy to see
\[
\Theta(s)=\underset{\pi\in\Gamma(\nu_{1}(\tau_{1}),\nu_{2}(\tau_{2}))}{\inf}\int_{M\times{M}}P(x,y,s)d\pi(x,y).
\]

Now we can use Lemma \ref{dual} and the maximum principle to prove the theorem just as what we have done in the proof of Theorem \ref{main}. The only difference is that the price functions $\varphi(s,x)$ and $\psi(s,y)$ solve the equation
\[-\frac{\partial}{\partial s}f=\tau(s)\Delta_{\tau(s)}f,\]
which is conjugate to the equation
\[\frac{\partial}{\partial s}u=\tau(s)[\Delta_{\tau(s)}u+R(x,\tau(s))u].\]
Finally, $(\varphi(s,x),\psi(s,y))$ will remain competitive because of $(\ref{P-inequa})$ together with the maximum principle.
\end{proof}

\phantomsection
\addcontentsline{toc}{section}{\refname}


\begin{thebibliography}{99}








\bibitem{Ben3} B. Andrews; J. Clutterbuck. \textit{Lipschitz bounds for solutions of quasilinear parabolic equations in one space variable}, {J. Differential Equations}, {\bf 246} (2009), no. 11, 4268-4283.


\bibitem{Ben} B. Andrews; J. Clutterbuck. \textit{Sharp modulus of continuity for parabolic equations on manifolds and lower bounds for the first eigenvalue}, {Anal. PDE}, {\bf 6} (2013), no. 5, 1013-1024.



\bibitem{Arnaudon} M. Arnaudon; K. A. Coulibaly; A. Thalmaier. \textit{Horizontal diffusion in $C^{1}$ path space}, {S\'{e}minaire de Probabilit\'{e}s XLIII, Lecture Notes in Mathematics}, (2011), 73-94.


\bibitem{Topping3} E. Cabezas-Rivas; P. M. Topping.  \textit{The canonical shrinking soliton associated to a Ricci flow}, {Calc. Var.}, {\bf 43} (2012), no. 1-2, 173-184.


\bibitem{CaoZhu} H.-D. Cao; X.-P. Zhu. \textit{A complete proof of the Poincar\'{e} and geometrization conjectures-application of the Hamilton-Perelman theory of the Ricci flow}, {Asian J. math.}, {\bf 10} (2006), no. 2, 165-492.

\bibitem{Chow} B. Chow; S.-C. Chu; D. Glickenstein; C. Guenther; J. Isenberg; T. Ivey; D. Knopf; P. Lu; F. Luo; L. Ni. \textit{The Ricci flow: techniques and applications: Part I: Geometric aspects.}, Math. Surv. Monogr., {\bf 135}, (2007), xxiv+536 pp.

\bibitem{Kleiner} B. Kleiner; J. Lott. \textit{Notes on Perelman's Papers}, {Geom. Topol.}, {\bf 12} (2008), no. 5, 2587-2855.

\bibitem{Kuwada} K. Kuwada; R. Philipowski. \textit{Coupling of Brownian motions and Perelman's L-functional}, {J. Funct. Anal.}, {\bf 260} (2011), no. 9, 2742-2766.

\bibitem{Lott2} J. Lott. \textit{Optimal transport and Perelman's reduced volume}, {Calc. Var.}, {\bf 36} (2009), 49-84.

\bibitem{Lott1} J. Lott; C. Villani. \textit{Ricci curvature for metric-measure spaces via optimal transport}, {Ann. of Math.}, {\bf 169} (2009), no. 3, 903-991.

\bibitem{Topping2} R. J. McCann; P. M. Topping. \textit{Ricci flow, entropy and optimal transportation}, {Amer. J. Math.}, {\bf 132} (2010),  no. 3, 711-730.


\bibitem{Perelman} G. Perelman. \textit{The entropy formula for the Ricci flow and its geometric applications}, {http://arXiv.org/abs/math/0211159v1},  (2002).


\bibitem{Sturm} M.-K. von Renesse; K.-T. Sturm. \textit{Transport inequalities, gradient estimates, entropy and Ricci curvature}, {Comm. Pure Appl. Math.}, {\bf 58} (2005), 923-940.


\bibitem{Sturm4} K.-T. Sturm. \textit{On the geometry of metric measure spaces I and II}, {Acta. Math.}, {\bf  196} (2006), 65-131 and 133-177.


\bibitem{Topping} P. M. Topping. \textit{L-optimal transportation for Ricci flow}, {J. Reine Angew. Math.}, {\bf 636} (2009), 93-122.


\bibitem{Villani} C. Villani. \textit{Topics in optimal transportation}, {Grad. Stud. Math.}, {\bf 58} (2003), xvi+370 pp.


\end{thebibliography}
\end{document}